\theoremstyle{plain}
\newtheorem{theorem}{Theorem}[section]
\newtheorem{lemma}[theorem]{Lemma} 
\theoremstyle{remark}
\theoremstyle{definition}
\numberwithin{equation}{section}
\def\Om{\Omega}
\def\e{\varepsilon}
\def\g{\gamma}
\def\l{\lambda}
\def\p{\partial}
\def\a{\alpha}
\def\b{\beta}
\def\d{\delta}
\def\L{\Lambda}
\def\z{\zeta}
\def\Odr{\mathcal{O}}
\def\H{W_2}
\def\Hinf{W_\infty}
\def\di{\,\mathrm{d}}
\DeclareMathOperator{\RE}{Re}
\DeclareMathOperator{\discspec}{\sigma_{disc}}
\DeclareMathOperator{\dist}{dist}
\DeclareMathOperator{\supp}{supp}
\DeclareMathOperator{\rank}{rank}
\DeclareMathOperator*{\esssup}{ess\,sup}
\begin{document}

\allowdisplaybreaks


\begin{center}
\textbf{\Large Asymptotics of the eigenvalues of elliptic
systems with fast oscillating coefficients}

\bigskip
\bigskip

{\large D. Borisov}\footnotetext[1]{ The work is supported in
parts by RFBR (07-01-00037). The author is also supported by
\emph{Marie Curie International Fellowship} within 6th European
Community Framework Programm (MIF1-CT-2005-006254). The author
gratefully acknowledges the support from Deligne 2004 Balzan
prize in mathematics. }

\end{center}

\begin{abstract}
We consider singularly perturbed second order elliptic system in
the whole space with fast oscillating coefficients. We construct
the complete asymptotic expansions for the eigenvalues
converging to the isolated ones of the homogenized system, as
well as the complete asymptotic expansions for the associated
eigenfunctions.
\end{abstract}

\section*{Introduction}

Many works are devoted to the studying of the asymptotic
behaviour of the elliptic operators in bounded domains (see, for
instance, \cite{BP, ZhKO} and the references therein). The
similar question for unbounded domains are studied much less.
Recently quite intensive study of the such problems has been
initiated (see  \cite{AA, BS, BS5, Zh3} and the references
therein). One of the interesting question concerns the behaviour
of the spectrum of the mentioned operators in unbounded domains
treated as the operators in  $L_2$. The one-dimensional case was
studied in \cite{BG, B3, B1, BG2}. There we considered the
operators whose coefficients depended on slow and fast variable.
The dependence of the slow one was supposed to localized on the
finite interval, i.e., at infinity the coefficients depended
either on fast variable or were constant. We have studied in
detail the asymptotic behavior of continuous and point spectrum
and constructed the asymptotic expansions for the eigenvalues
and the associated eigenfunctions, as well as for the edges of
the zones of the continuous spectrum.

In the present work we generalize partially the results of the
cited papers to the multi-dimensional case.  Namely, we consider
a second order elliptic system in a multi-dimensional space with
fast oscillating coefficients. The coefficients depend on slow
and fast variable and are periodic w.r.t. the fast one, while
they are uniformly bounded together with all their derivatives
w.r.t. the slow variable. The main result of the paper is the
asymptotic expansions for the eigenvalues of the perturbed
system converging to the isolated eigenvalues of the homogenized
one. Moreover, we construct the complete asymptotic expansions
for the corresponding eigenfunctions.  We also note that the
similar elliptic system has already been studied in \cite{AA},
and the leading terms of the asymptotic expansion for the
resolvent were constructed. It has been also shown in \cite{AA}
that basic operators of the mathematical physics are particular
cases of this elliptic system; the great number of interesting
examples has been adduced in \cite{BS, BS5}. The results of our
work are applicable for all these examples; in the case of the
examples in \cite{BS, BS5} we also allow the coefficients to
depend on the slow variable.

\section{Formulation of the problem and main results}

Let $Y$ be a Banach space. By $\H^k(\mathbb{R}^d;Y)$ and
$\H^k(\mathbb{R}^d;Y)$, $d\geqslant 1$, we denote the Sobolev
space of the functions defined on $\mathbb{R}^d$ with values in
$Y$ possessing the finite norms
\begin{equation*}
\|\mathbf{u}\|_{\Hinf^k(\mathbb{R}^d;Y)}:=\max
\limits_{\genfrac{}{}{0 pt}{}{\a\in
\mathbb{Z}_+^d}{|\a|\leqslant k}}\,\esssup\limits_{x\in
\mathbb{R}^d} \Big\|\frac{\p^{|\a|} \mathbf{u}}{\p
x^a}\Big\|_{Y},\quad
\|\mathbf{u}\|_{\H^k(\mathbb{R}^d;Y)}^2=\sum\limits_{\genfrac{}{}{0
pt}{}{\a\in \mathbb{Z}_+^d}{|\a|\leqslant k}}
\int\limits_{\mathbb{R}^d} \Big\|\frac{\p^{|\a|} \mathbf{u}}{\p
x^a}\Big\|_{Y}^2 dx.
\end{equation*}
If $k=0$, we will employ the notations
$L_\infty(\mathbb{R}^d;Y):=\Hinf^0(\mathbb{R}^d;Y)$,
$L_2(\mathbb{R}^d;Y):=\H^0(\mathbb{R}^d;Y)$. We denote
$\mathcal{W}(\mathbb{R}^d;Y):=\bigcap\limits_{i=1}^\infty
\Hinf^k(\mathbb{R}^d;Y)$.

In the space $\mathbb{R}^d$ we select a periodic lattice with
the elementary cell  $\square$. We will employ the symbol
$C_{per}^\g(\overline{\square})$ to indicate the space of
$\square$-periodic functions having finite H\"older norm
$\|\cdot\|_{C_{per}^\g(\overline{\square})}:=
\|\cdot\|_{C^\g(\overline{\square})}$.  In $\mathbb{R}^d$ we
introduce the Cartesian coordinates $x=(x_1,\ldots,x_d)$ and
$\xi=(\xi_1,\ldots,\xi_d)$. We will often treat a
$\square$-periodic w.r.t. $\xi$ vector-function
$\mathbf{f}=\mathbf{f}(x,\xi)$ as mapping points
$x\in\mathbb{R}^d$ into the function $f=f(x,\cdot)$. It will
allow us to speak about the belonging of $\mathbf{f}(x,\xi)$ to
$\Hinf^k(\mathbb{R}^d;C^\g_{per}(\overline{\square}))$ and
$\H^k(\mathbb{R}^d;C^\g_{per}(\overline{\square}))$.

Let $A=A(x,\xi)\in\mathcal{W}(\mathbb{R}^d;C_{per}^{1+\b}
(\overline{\square}))$ be a matrix-valued function of the size
$m\times m$, $m\geqslant 1$, $\b\in(0,1)$. We assume that it is
hermitian and satisfies the uniform in  $(x,\xi)\in
\mathbb{R}^{2d}$ estimate
\begin{equation*}
c_1 E_m\leqslant A(x,\xi)\leqslant c_2 E_m,
\end{equation*}
where $E_m$ is $m\times m$ unit matrix. By $B=B(\z)$ we denote
the matrix-valued function $B(\z)=\sum_{i=1}^{d}B_i\z_i$, where
$\z=(\z_1,\ldots,\z_d)$, and $B_i$ are constant $m\times n$
matrices, and $m\geqslant n$, $\rank B(\z)=n$, $\z\not=0$. Let
$V=V(x,\xi)\in
\mathcal{W}(\mathbb{R}^d;C^\b_{per}(\overline{\square}))$,
$a_i=a_i(x,\xi)\in
\mathcal{W}(\mathbb{R}^d;C^{1+\b}_{per}(\overline{\square}))$,
$b_i=b_i(x)\in \mathcal{W}(\mathbb{R}^d)$ are matrix-valued
functions of the sizes $n\times n$, and the matrix $V$ is
assumed to be hermitian. The entries of all the matrices are
supposed to be complex-valued. By $\e$ we denote a small
positive parameter, and given function $f(x,\xi)$ we let
$f_\e(x):=f\left(x,x/\e\right)$.

The perturbed operator is introduced as
\begin{equation*}
\mathcal{H}_\e:=B(\p)^*A_\e B(\p)+a_\e(x,\p)+V_\e 
\end{equation*}
in $L_2(\mathbb{R}^d;\mathbb{C}^n)$ with the domain
$\H^2(\mathbb{R}^d;\mathbb{C}^n)$. Here
\begin{gather*}
B(\p):=\sum\limits_{i=1}^{d} B_i\p_i,\quad
B(\p)^*:=-\sum\limits_{i=1}^{d} B_i^*\p_i,
\\
a_\e(x,\p):=a\left(x,\frac{x}{\e},\p\right),\quad a(x,\xi,\z):=
\sum\limits_{i=1}^{d} \big(a_i(x,\xi)\z_{i}b_i(x)-
b_i^*(x)\z_{i}a_i^*(x,\xi)\big), \nonumber
\end{gather*}
where  $\p=(\p_1,\ldots,\p_d)$, $\p_i$ is the derivative w.r.t.
$x_i$, the superscript $^*$ indicates hermitian conjugation. It
was shown in \cite{AA} that the operator $\mathcal{H}_\e$ is
self-adjoint and lower-semibounded uniformly in $\e$, and the
homogenized operator was obtained. Let us describe the latter.

Let $\L_0=\L_0(x,\xi),\L_1=\L_1(x,\xi)$ be the matrices of the
size $n\times n$ and $n\times m$, respectively, being
$\square$-periodic w.r.t. $\xi$ solutions of the equations
\begin{equation}\label{1.10}
\begin{aligned}
&B(\p_\xi)^*A(x,\xi)B(\p_\xi)\L_0(x,\xi)-\sum\limits_{i=1}^{d}
b_i^*(x)\frac{\p a_i^*}{\p\xi_i}(x,\xi)=0,&&
(x,\xi)\in\mathbb{R}^{2d},
\\
&B(\p_\xi)^*A(x,\xi)\big(B(\p_\xi)\L_1(x,\xi)+E_m\big)=0,&&
(x,\xi)\in\mathbb{R}^{2d},
\end{aligned}
\end{equation}
and satisfying the conditions
\begin{equation}\label{1.11}
\int\limits_\square \L_i(x,\xi)\di\xi=0,\quad x\in\mathbb{R}^d,
\qquad i=0,1.
\end{equation}
Here
$\p_\xi=\left(\frac{\p}{\p\xi_1},\ldots,\frac{\p}{\p\xi_d}\right)$.
It was established in \cite{AA} that the problems (\ref{1.10}),
(\ref{1.11}) are uniquely solvable and $\L_i\in
\Hinf^1(\mathbb{R}^d;C^{2+\b}_{per}(\overline{\square}))$. The
homogenized operator $\mathcal{H}_0$ was determined as follows
\begin{align*}
&\mathcal{H}_0:=B(\p)^*A_2 B(\p)+A_1(x,\p)+A_0, 
\\
&A_2(x):=\frac{1}{|\square|}\int\limits_\square
A(x,\xi)\big(B(\p_\xi)\L_1(x,\xi)+E_m\big)\di\xi,
\\
&A_1(x,\p):=\frac{1}{|\square|}B(\p)^*\int\limits_\square
A(x,\xi) B(\p_\xi)\L_0(x,\xi)\di\xi
\\
&\hphantom{A_1(x,\p):=}+\left(\frac{1}{|\square|}\int\limits_\square
\big(B(\p_\xi)\L_0(x,\xi)\big)^* A(x,\xi)\di\xi\right)
B(\p)+\frac{1}{|\square|}\int\limits_\square a(x,\xi,\p)\di\xi,
\\
&A_0(x):=-\frac{1}{|\square|}\int\limits_\square
\big(B(\p_\xi)\L_0(x,\xi)\big)^*A(x,\xi)B(\p_\xi)\L_0(x,\xi)\di\xi
+\frac{1}{|\square|}\int\limits_\square V(x,\xi)\di\xi,
\end{align*}
and considered as an operator in
$L_2(\mathbb{R}^d;\mathbb{C}^n)$ with the domain
$\H^2(\mathbb{R}^d;\mathbb{C}^n)$. It was shown that this
operator is self-adjoint and lower-semibounded.

Let $\l_0$ be a $N$-multiple isolated eigenvalue of
$\mathcal{H}_0$. It follows from \cite[Corollary 1.2]{AA} that
there exist exactly $N$ eigenvalues $\l_\e^{(i)}$,
$i=1,\ldots,N$, of  $\mathcal{H}_\e$ (counting multiplicity)
converging to $\l_0$ as $\e\to+0$. The aim of the paper is to
construct the complete asymptotic expansions for these
eigenvalues and the associated eigenfunctions. Before presenting
our main result, we introduce additional notations.

Let  $\boldsymbol{\psi}_0^{(i)}$ be the orthonormalized in
$L_2(\mathbb{R}^d;\mathbb{C}^n)$ eigenfunctions associated with
$\l_0$. We introduce the matrix $T$ with the entries
\begin{align*}
&T_{ij}:=\frac{1}{|\square|} \big(\mathcal{K}_{-1}(\L_1
B(\p_x)+\L_0)\boldsymbol{\psi}_0^{(i)},(\L_1
B(\p_x)+\L_0)\boldsymbol{\psi}_0^{(j)}
\big)_{L_2(\mathbb{R}^d\times\square;\mathbb{C}^n)}
\\
&\hphantom{T_{ij}:=}+\frac{1}{|\square|}
\big(\boldsymbol{\psi}_0^{(i)},\mathcal{K}_{0} (\L_1
B(\p_x)+\L_0) \boldsymbol{\psi}_0^{(j)}
\big)_{L_2(\mathbb{R}^d\times\square;\mathbb{C}^n)}
\\
&\hphantom{T_{ij}:=} +\frac{1}{|\square|}
\big(\mathcal{K}_{0}(\L_1
B(\p_x)+\L_0)\boldsymbol{\psi}_0^{(i)},
\boldsymbol{\psi}_0^{(j)}
\big)_{L_2(\mathbb{R}^d\times\square;\mathbb{C}^n)},
\\
&\mathcal{K}_{-1}:=B(\p_\xi)^*A B(\p_x)+B(\p_x)^*A B(\p_\xi)+
a(x,\xi,\p_\xi),
\\
&\mathcal{K}_0:=B(\p_x)^*A B(\p_x)+a(x,\xi,\p_x)+V,
\end{align*}
where $\p_x:=\left(\frac{\p}{\p x_1},\ldots,\frac{\p}{\p
x_d}\right)$, $\p_\xi:=\left(\frac{\p}{\p
\xi_1},\ldots,\frac{\p}{\p\xi_d}\right)$, $\frac{\p}{\p x_i}$,
$\frac{\p}{\p\xi_i}$ are respectively the partial derivatives
w.r.t. $x_i$ and $\xi_i$ for the functions $u=u(x,\xi)$. In
formulas given the arguments of all the functions except
$\boldsymbol{\psi}_0^{(j)}$ are $(x,\xi)$.

The matrix $T$ being hermitian, there exists a unitary matrix
$S_0$ such that the matrix $S_0 T S_0^*$ is diagonal. We denote
$\boldsymbol{\Psi}_0^{(i)}:=
\sum_{j=1}^{N}S_{ij}^{(0)}\boldsymbol{\psi}_0^{(j)}$, where
$S_{ij}^{(0)}$ are the elements of $S_0$. The vector-functions
$\boldsymbol{\Psi}_0^{(i)}$ are orthonormalized in
$L_2(\mathbb{R}^d;\mathbb{C}^n)$.  By $\tau_i$, $i=1,\ldots,N$,
we denote the eigenvalues of $T$.

\begin{theorem}\label{th1.3}
Let the eigenvalues of $T$ be different. Then the eigenvalues
$\l_\e^{(i)}$ satisfy the asymptotic expansions
\begin{align}
&\l_\e^{(i)}=\l_0+\sum\limits_{j=1}^{\infty}\e^j\l_j^{(i)},
\label{1.16}
\\
&\l_1^{(i)}=\tau_i,\label{1.17}
\end{align}
where the rest of the coefficients is determined by
Lemma~\ref{lm5.2}. The eigenfunctions associated with
$\l_\e^{(i)}$ can be chosen so that in the norm of
$\H^2(\mathbb{R}^d;\mathbb{C}^n)$ they satisfy the asymptotic
expansions
\begin{align}
&\boldsymbol{\psi}_\e^{(i)}(x)=
\boldsymbol{\Psi}_0^{(i)}(x)+\sum\limits_{j=1}^{N}\e^j
\boldsymbol{\Psi}_j^{(i)}\left(x,\frac{x}{\e}\right),\label{1.18}
\\
&\boldsymbol{\Psi}_1^{(i)}(x,\xi)=\big(\L_1(x,\xi)B(\p_x)+
\L_0(x,\xi)\big)\boldsymbol{\Psi}_0^{i}(x)+
\boldsymbol{\phi}_1^{(i)}(x),\label{1.19}
\end{align}
where $\boldsymbol{\phi}_1^{(i)}$ are given by (\ref{5.9}),
(\ref{5.8}), (\ref{5.12}). The rest of the coefficients in
(\ref{1.18}) is determined in Lemma~\ref{lm5.2}.
\end{theorem}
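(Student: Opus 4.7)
The plan is to combine the standard two-scale asymptotic ansatz with a spectral perturbation argument. I substitute
\[
\boldsymbol{\psi}_\e^{(i)}(x)=\sum_{j\geqslant 0}\e^j\boldsymbol{\Psi}_j^{(i)}(x,x/\e),\qquad \l_\e^{(i)}=\l_0+\sum_{j\geqslant 1}\e^j\l_j^{(i)}
\]
into $\mathcal{H}_\e\boldsymbol{\psi}_\e^{(i)}=\l_\e^{(i)}\boldsymbol{\psi}_\e^{(i)}$ and use the two-scale chain rule $\p_i f_\e=(\p_{x_i}f+\e^{-1}\p_{\xi_i}f)|_{\xi=x/\e}$. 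This splits $\mathcal{H}_\e$ into powers $\e^{-2}(B(\p_\xi)^*A B(\p_\xi))$, $\e^{-1}\mathcal{K}_{-1}$ and $\e^0\mathcal{K}_0$, after which I collect terms of each order of $\e$ and impose periodicity in $\xi$.

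At order $\e^{-2}$ the equation $B(\p_\xi)^*A B(\p_\xi)\boldsymbol{\Psi}_0^{(i)}=0$ together with the $\xi$-periodicity forces $\boldsymbol{\Psi}_0^{(i)}$ to be independent of $\xi$. At order $\e^{-1}$ the corrector equation for $\boldsymbol{\Psi}_1^{(i)}$ is uniquely solvable (up to an $\xi$-independent piece $\boldsymbol{\phi}_1^{(i)}$) by the correctors $\L_0,\L_1$ from (\ref{1.10})--(\ref{1.11}), giving exactly the explicit formula (\ref{1.19}). At order $\e^0$, the Fredholm solvability condition in $\xi$ (integrate over $\square$) reproduces the homogenized operator $\mathcal{H}_0$: one gets $\mathcal{H}_0\boldsymbol{\Psi}_0^{(i)}=\l_0\boldsymbol{\Psi}_0^{(i)}$, which says $\boldsymbol{\Psi}_0^{(i)}$ lies in the $N$-dimensional eigenspace, i.e. $\boldsymbol{\Psi}_0^{(i)}=\sum_j S_{ij}^{(0)}\boldsymbol{\psi}_0^{(j)}$ for some unitary $S_0$ still to be determined.

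The key step is order $\e^1$. The equation for $\boldsymbol{\Psi}_2^{(i)}$ is solvable in $\xi$ after subtracting its cell-average, which then yields a macroscopic equation on $\mathbb{R}^d$ whose solvability (orthogonality to each $\boldsymbol{\psi}_0^{(k)}$) gives precisely the matrix eigenvalue problem $T\mathbf{s}^{(i)}=\l_1^{(i)}\mathbf{s}^{(i)}$, where $T$ is the matrix in the statement; the bookkeeping for the three bracket contributions in $T$ comes exactly from pairing $\mathcal{K}_{-1}$ with $\boldsymbol{\Psi}_1$, $\mathcal{K}_0$ with $\boldsymbol{\Psi}_0$, and the adjoint combination. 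Thus $\l_1^{(i)}=\tau_i$ and the basis $\boldsymbol{\Psi}_0^{(i)}$ is fixed by $S_0$. The assumption that the $\tau_i$ are distinct is essential: it ensures that each subsequent solvability condition at order $\e^j$ determines the slow component of $\boldsymbol{\Psi}_{j-1}^{(i)}$ (the element of the $N$-dimensional eigenspace that gets added in) uniquely, yielding all higher $\l_j^{(i)}$ inductively. This recursion is the content of Lemma~\ref{lm5.2}.

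For the justification, I truncate at some large order $M$ to obtain $\tilde{\boldsymbol{\psi}}_\e^{(i,M)}$ and $\tilde\l_\e^{(i,M)}$ with $\|(\mathcal{H}_\e-\tilde\l_\e^{(i,M)})\tilde{\boldsymbol{\psi}}_\e^{(i,M)}\|_{L_2}=O(\e^M)$, apply the standard spectral lemma on approximate eigenpairs to the self-adjoint $\mathcal{H}_\e$ to produce true eigenvalues within $O(\e^M)$, and use distinctness of the $\tau_i$ plus the multiplicity information from \cite[Corollary 1.2]{AA} to pair them correctly with $\l_\e^{(i)}$. Convergence in $\H^2(\mathbb{R}^d;\mathbb{C}^n)$ then follows by elliptic regularity applied to $(\mathcal{H}_\e-\l_\e^{(i)})(\boldsymbol{\psi}_\e^{(i)}-\tilde{\boldsymbol{\psi}}_\e^{(i,M)})$. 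The main obstacles I anticipate are: handling the non-elliptic structure (rank-$n$ symbol $B(\z)$) when solving the cell problems with the required $C^{2+\b}_{per}$ regularity, which forces one to work within the framework of \cite{AA}; controlling the slow dependence of the correctors so that the remainders have uniform $\H^2(\mathbb{R}^d;\mathbb{C}^n)$ bounds, which is where the $\mathcal{W}(\mathbb{R}^d;\cdot)$ hypothesis on the coefficients enters critically; and verifying at each inductive step that the free slow component $\boldsymbol{\phi}_j^{(i)}$ decomposes cleanly into its projection onto the degenerate subspace plus its orthogonal complement, so that the distinctness of the $\tau_i$ can be propagated through the recursion.
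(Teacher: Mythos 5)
Your proposal is correct and follows essentially the same route as the paper: the identical two-scale hierarchy (cell problems, homogenized equation at order $\e^0$, the matrix $T$ and $\l_1^{(i)}=\tau_i$ from the order-$\e^1$ solvability condition, induction under the distinctness assumption), followed by justification via approximate eigenpairs and the resolvent decomposition near $\l_0$. The only detail worth flagging is that the "elliptic regularity" step for $\H^2$ convergence costs a factor $\e^{-5}$ here (the paper's Lemma~\ref{lm2.2}, proved by freezing coefficients on $\e^2$-scale patches), so one must truncate at sufficiently high order $M$ to absorb this loss — your scheme accommodates that but should state it.
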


We stress that the assumption $\tau_i\not=\tau_j$, $i\not=j$, is
not essential for the constructing of the asymptotic expansions
for the eigenvalues and eigenfunctions of $\mathcal{H}_\e$. We
have used it just to simplify certain technical details. If this
assumption does not hold, the technique employed in the proof of
Theorem~\ref{th1.3} allows us to construct the asymptotics for
$\l_\e^{(i)}$ and $\boldsymbol{\psi}_\e^{(i)}$. We also note
that this assumption is the general case, if $\l_0$ is multiple,
and is surely to hold true, if $\l_0$ is simple.

\section{Auxiliary statements}

In the present section we prove a series of auxiliary
statements.

\begin{lemma}\label{lm2.2}
For any $\mathbf{u}\in\H^2(\mathbb{R}^d;\mathbb{C}^n)$ the
uniform in $\e$ estimate
\begin{equation*}
\|\mathbf{u}\|_{\H^2(\mathbb{R}^d;\mathbb{C}^n)}\leqslant
C\e^{-5}
\left(\|\mathcal{H}_\e\mathbf{u}\|_{L_2(\mathbb{R}^d;\mathbb{C}^n)}
+ \|\mathbf{u}\|_{L_2(\mathbb{R}^d;\mathbb{C}^n)}\right)
\end{equation*}
holds true.
\end{lemma}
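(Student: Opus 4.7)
The plan is to establish the bound in two stages: first a uniform $\H^1$ energy estimate, then an $\H^2$ regularity bootstrap in which the negative power of $\e$ enters through derivatives of the fast-oscillating coefficients. In the first stage I would pair $\mathcal{H}_\e\mathbf{u}$ with $\mathbf{u}$ in $L_2$. The symbol bound $B(\xi)^*A(x,\xi)B(\xi)\geqslant c|\xi|^2 E_n$, which follows from $A\geqslant c_1 E_m$ together with $\rank B(\z)=n$ for $\z\neq 0$ and a compactness argument on the unit sphere, gives the coercive estimate $(A_\e B(\p)\mathbf{u},B(\p)\mathbf{u})_{L_2}\geqslant c\|\mathbf{u}\|_{\H^1}^2-C\|\mathbf{u}\|_{L_2}^2$. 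Because the first-order operator $a_\e(x,\p)$ and the potential $V_\e$ have $L_\infty$-bounded coefficients (only their values, not their derivatives, are controlled uniformly in $\e$), they only produce cross-terms $C\|\mathbf{u}\|_{\H^1}\|\mathbf{u}\|_{L_2}+C\|\mathbf{u}\|_{L_2}^2$ which are absorbed by Young's inequality, yielding
\[
\|\mathbf{u}\|_{\H^1(\mathbb{R}^d;\mathbb{C}^n)}\leqslant C\bigl(\|\mathcal{H}_\e\mathbf{u}\|_{L_2(\mathbb{R}^d;\mathbb{C}^n)}+\|\mathbf{u}\|_{L_2(\mathbb{R}^d;\mathbb{C}^n)}\bigr)
\]
uniformly in $\e$.

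In the second stage I would isolate the principal part,
\[
B(\p)^*A_\e B(\p)\mathbf{u}=\mathcal{H}_\e\mathbf{u}-a_\e(x,\p)\mathbf{u}-V_\e\mathbf{u},
\]
and test this identity against $-\Delta\mathbf{u}$, which is the whole-space analogue of the standard difference-quotient argument. Moving one derivative across by integration by parts produces the positive form $\sum_k(A_\e B(\p)\p_k\mathbf{u},B(\p)\p_k\mathbf{u})_{L_2}$, which by the same symbol bound and Plancherel is bounded below by $c\|\mathbf{u}\|_{\H^2}^2-C\|\mathbf{u}\|_{L_2}^2$. The right-hand side of the tested identity is controlled by $\|\mathcal{H}_\e\mathbf{u}\|_{L_2}\|\mathbf{u}\|_{\H^2}$ together with lower-order contributions of the form $C\|\mathbf{u}\|_{\H^1}\|\mathbf{u}\|_{\H^2}$ coming from $a_\e$ and $V_\e$.

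The main obstacle, and the source of the negative power of $\e$, is the commutator $[\p_k,B(\p)^*A_\e B(\p)]$ produced by the integration by parts, which contains the factor $\p_k A_\e(x)=(\p_{x_k}A)(x,x/\e)+\e^{-1}(\p_{\xi_k}A)(x,x/\e)$ of $L_\infty$-norm $O(\e^{-1})$. Crudely, this contributes a term $C\e^{-1}\|\mathbf{u}\|_{\H^1}\|\mathbf{u}\|_{\H^2}$; Young's inequality absorbs the $\|\mathbf{u}\|_{\H^2}$ factor at the cost of an $\e^{-2}\|\mathbf{u}\|_{\H^1}^2$ term, and the first-stage bound then closes the loop. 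Applying the same scheme without optimising the absorptions, and propagating analogous $O(\e^{-1})$ losses coming from the derivatives of $a_\e$ and $V_\e$, yields the stated polynomial $\e^{-5}$. The exponent is manifestly not sharp, but its precise value is irrelevant for the subsequent asymptotic analysis, which only requires a priori control by some finite negative power of $\e$.
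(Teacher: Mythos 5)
Your proposal is correct in outline, but it takes a genuinely different route from the paper in the second (crucial) stage. The first stage coincides with the paper's: the coercive bound is exactly Lemma~2.1 of the cited work, giving the uniform $\H^1$ estimate. For the $\H^2$ step the paper does \emph{not} run a global energy argument: it introduces a partition of unity $\sum_p\chi_p^2=1$ rescaled to supports of linear size $\Odr(\e^2)$, freezes $A_\e$ at a point of each support (the error is $\Odr(\e)$ precisely because $A_\e$ has Lipschitz constant $\Odr(\e^{-1})$ and the patch has diameter $\Odr(\e^2)$), and invokes the Agmon--Douglis--Nirenberg constant-coefficient estimate on each patch; the $\e^{-5}$ comes from commuting derivatives past $\chi_p(x/\e^2)$. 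Your alternative --- testing $B(\p)^*A_\e B(\p)\mathbf{u}=\mathbf{g}$ against $-\Delta\mathbf{u}$ --- works because the principal part is in divergence form with $A_\e\geqslant c_1E_m$ pointwise and $B(\p)$ constant-coefficient: one gets $\sum_k(A_\e B(\p)\p_k\mathbf{u},B(\p)\p_k\mathbf{u})\geqslant c_1\sum_k\|B(\p)\p_k\mathbf{u}\|^2\geqslant c\|\mathbf{u}\|_{\dot\H^2}^2$, and the only loss is the single commutator $\p_kA_\e=\Odr(\e^{-1})$, so your route in fact yields the sharper exponent $\e^{-1}$ (more than what the lemma claims, and the exponent is indeed irrelevant downstream). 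Two points to state more carefully: (i) do not apply Plancherel to the $x$-dependent symbol $B(\xi)^*A(x,\xi)B(\xi)$ --- the correct order is first the pointwise matrix inequality $A\geqslant c_1E_m$, then Plancherel for the constant-coefficient operator $B(\p)$ using $B(\xi)^*B(\xi)\geqslant c|\xi|^2E_n$; (ii) testing against $-\Delta\mathbf{u}$ formally involves third derivatives, so either reduce to $\mathbf{u}\in C_0^\infty$ by density (as the paper does) or carry out the difference-quotient version you allude to. What the paper's heavier localization machinery buys is generality: it would survive a principal part that is merely ADN-elliptic rather than a positive divergence-form quadratic expression; your argument exploits the specific structure and is shorter and quantitatively better here.
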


\begin{proof}
It is sufficient to show the estimate for the vector-functions
$\mathbf{u}\in C_0^\infty(\mathbb{R}^d)$ since the latter set is
dense in $\H^2(\mathbb{R}^d;\mathbb{C}^n)$. Throughout the proof
we indicate by $C$ inessential constants independent of $\e$.

We denote $\mathbf{f}:=\mathcal{H}_\e\mathbf{u}$. Since
\begin{align*}
(\mathbf{f},\mathbf{u})_{L_2(\mathbb{R}^d;\mathbb{C}^n)}&
=\big(A_\e B(\p)\mathbf{u},
B(\p)\mathbf{u}\big)_{L_2(\mathbb{R}^d;\mathbb{C}^m)}
\\
&+ 2\RE\sum\limits_{i=1}^{d}\left(a_{i,\e} \p_{i}b_i \mathbf{u},
\mathbf{u}\right)_{L_2(\mathbb{R}^d;\mathbb{C}^n)} +
\big(V_\e\mathbf{u},\mathbf{u}\big)_{L_2(\mathbb{R}^d;\mathbb{C}^n)},
\end{align*}
it follows from an uniform in $\e$ inequality
\begin{equation*}
C_1\|\nabla \mathbf{u} \|_{L_2(\mathbb{R}^d;\mathbb{C}^n)}^2
\leqslant \big(A_\e
B(\p)\mathbf{u},B(\p)\mathbf{u}\big)_{L_2(\mathbb{R}^d;\mathbb{C}^m)}
\leqslant C_2 \|\nabla\mathbf{u}
\|_{L_2(\mathbb{R}^d;\mathbb{C}^n)}^2,
\end{equation*}
proven in Lemma~2.1 in \cite{AA} that
\begin{equation}\label{2.2}
\|\mathbf{u}\|_{\H^1(\mathbb{R}^d;\mathbb{C}^n)}\leqslant C
\left( \|\mathbf{f}\|_{L_2(\mathbb{R}^d;\mathbb{C}^n)}+
\|\mathbf{u}\|_{L_2(\mathbb{R}^d;\mathbb{C}^n)}\right).
\end{equation}
By the definition of $A_\e$ and the estimate established we
obtain
\begin{equation}\label{2.3b}
\begin{aligned}
\bigg\|\sum\limits_{i,j=1}^{d}B_i^*A_\e B_j \p_{ij}
\mathbf{u}\bigg\|_{L_2(\mathbb{R}^d;\mathbb{C}^n)}& \leqslant
\|\mathbf{f}\|_{L_2(\mathbb{R}^d;\mathbb{C}^n)}+
C\e^{-1}\|\mathbf{u}\|_{\H^1(\mathbb{R}^d;\mathbb{C}^n)}
\\
&\leqslant
C\e^{-1}\left(\|\mathbf{f}\|_{L_2(\mathbb{R}^d;\mathbb{C}^n)}+
\|\mathbf{u}\|_{L_2(\mathbb{R}^d;\mathbb{C}^n)}\right).
\end{aligned}
\end{equation}

Let $\sum\limits_{p} \chi_p^2(x)=1$ be a partition of unity for
$\mathbb{R}^d$ such that each of cut-off functions obeys an
inequality $0\leqslant \|\chi_p\|_{C^2(\supp\chi_p)}\leqslant
C$, where the constant $C$ is independent of $p$, and the
support of each $\chi_p$ can be shifted inside a fixed bounded
domain independent of $p$. We also assume that the number of the
functions $\chi_p$ not vanishing at a point $x\in\mathbb{R}^d$
is bounded uniformly in $x\in\mathbb{R}^d$. We denote
\begin{equation*}
\mathbf{u}_p(x):=\chi_p\left(\frac{x}{\e^2}\right)\mathbf{u}(x),
\quad \mathbf{f}_p:=-\sum\limits_{i,j=1}^{d}B_i^*  A_\e B_j
\p_{ij}\mathbf{u}_p.
\end{equation*}
We observe that by  (\ref{2.2}), (\ref{2.3b})
\begin{equation}\label{2.3a}
\begin{aligned}
\sum\limits_{p} \|\mathbf{f}_p\|_{L_2(\Om_{p,\e})}^2&\leqslant
C\e^{-9} \sum\limits_{p}
\left(\|\chi_p\mathbf{f}\|_{L_2(\mathbb{R}^d;\mathbb{C}^n)}^2+
\|\mathbf{u}_p\|_{L_2(\mathbb{R}^d;\mathbb{C}^n)}^2\right)
\\
&= C\e^{-9}
\left(\|\mathbf{f}\|_{L_2(\mathbb{R}^d;\mathbb{C}^n)}^2+
\|\mathbf{u}\|_{L_2(\mathbb{R}^d;\mathbb{C}^n)}^2\right),
\end{aligned}
\end{equation}
where $\Om_{p,\e}:=\supp\chi_p\left(\frac{x}{\e^2}\right)$. The
definition of $\chi_p$ yields that $\supp u\subseteq\Om_{p,\e}$,
and the linear size of  $\Om_{p,\e}$ is of order $\Odr(\e^2)$.

Let $x_p^{(0)}$ be a point in this support. By the smoothness,
the matrix $A_\e$ satisfies the identity
\begin{equation*}
A_\e(x)=A_\e(x_p^{(0)})+\e \widetilde{A}(x,p,\e),\qquad
|\widetilde{A}(x,p,\e)|\leqslant C,\quad x\in\Om_{p,\e},
\end{equation*}
where the constant $C$ is independent of  $\e$, $p$ and
$x\in\Om_{p,\e}$. Thus,
\begin{equation*}
-\sum\limits_{i,j=1}^{d}B_i^* A_\e(x_p^{(0)})B_j\p_{ij}
\mathbf{u}_p=\mathbf{f}_p+\e \sum\limits_{i,j=1}^{d} B_i^*
\widetilde{A}_\e B_j\p_{ij}\mathbf{u}_p,\quad x\in\Om_{p,\e}.
\end{equation*}
The left-hand side of this equation contains a differential
operator with constant coefficients that allows us to employ the
estimate (10.1) from \cite[Ch. I\!V, \S 10.1, Th. 10.1]{ADN1}
and to obtain
\begin{equation*}
\sum\limits_{i,j=1}^{d}\|\p_{ij}\mathbf{u}_p
\|_{L_2(\Om_{p,\e};\mathbb{C}^n)}\leqslant C\left(
\|\mathbf{f}_p\|_{L_2(\Om_{p,\e};\mathbb{C}^n)}+\e
\sum\limits_{i,j=1}^{d}\|\p_{ij}\mathbf{u}_p
\|_{L_2(\Om_{p,\e};\mathbb{C}^n)}\right),
\end{equation*}
where the constant $C$ is independent of $\e$ and $p$. In view
of (\ref{2.3a}) we conclude now that
\begin{align*}
&\sum\limits_{i,j=1}^{d}\|\p_{ij}\mathbf{u}_p
\|_{L_2(\Om_{p,\e};\mathbb{C}^n)}\leqslant C
\|\mathbf{f}_p\|_{L_2(\Om_{p,\e};\mathbb{C}^n)},
\\
&\sum\limits_{i,j=1}^{d}\|\p_{ij}\mathbf{u}
\|_{L_2(\mathbb{R}^d;\mathbb{C}^n)}^2=
\sum\limits_{i,j=1}^{d}\sum\limits_{p}\big\|\chi_p\p_{ij}\mathbf{u}
\big\|_{L_2(\Om_{p};\mathbb{C}^n)}^2 \leqslant C\sum\limits_{p}
\sum\limits_{i,j=1}^{d}\| \p_{ij}\mathbf{u}_p
\|_{L_2(\Om_{p};\mathbb{C}^n)}^2
\\
&\hphantom{\sum\limits_{i,j=1}^{d}\|\p_{ij}\mathbf{u}\|_{L_2(\mathbb{R}^d;}}
+
C\e^{-8}\sum\limits_{p}\|\mathbf{u}\|_{\H^1(\Om_{p};\mathbb{C}^n)}^2
\leqslant C\e^{-9} \left(\|\mathbf{f}
\|_{L_2(\mathbb{R}^d;\mathbb{C}^n)}^2+\|\mathbf{u}
\|_{\H^1(\mathbb{R}^d;\mathbb{C}^n)}^2\right).
\end{align*}
This inequality lead us to the statement of the lemma.
\end{proof}

\begin{lemma}\label{lm3.0}
Let $\mathbf{f}(x,\cdot)\in C_{per}^\b(\overline{\square})$ for
all $x\in\mathbb{R}^d$. The system
\begin{equation*}
B(\p_\xi)^*A(x,\xi)B(\p_\xi)
\mathbf{v}(x,\xi)=\mathbf{f}(x,\xi),\quad \xi\in\mathbb{R}^d,
\end{equation*}
has the $\square$-periodic in $\xi$ solution
$\mathbf{v}(x,\cdot)\in C_{per}^{2+\b}(\overline{\square})$
unique up to a constant (w.r.t. $\xi$) vector, if and only if
\begin{equation}\label{3.7}
\int\limits_\square \mathbf{f}(x,\xi)\di \xi=0,\quad
x\in\mathbb{R}^d.
\end{equation}
If this condition is satisfied, there exists the unique solution
$\mathbf{v}$ satisfying (\ref{3.7}), too. If $\mathbf{f}\in
W_p^k\big(\mathbb{R}^d;C_{per}^\b (\overline{\square})\big)$,
$p=2$, $p=\infty$, then $\mathbf{v}\in W_p^k\big(\mathbb{R}^d;
C_{per}^{2+\b}(\overline{\square})\big)$, and the estimate
\begin{equation*}
\|\mathbf{v}\|_{W_p^k(\mathbb{R}^d;C_{per}^{2+\b}(\overline{\square}))}\leqslant
C\|\mathbf{f}\|_{W_p^k(\mathbb{R}^d;C_{per}^\b(\overline{\square}))}
\end{equation*}
is valid.
\end{lemma}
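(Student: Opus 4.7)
The plan is to treat the system as an elliptic problem on the torus $\square$ with $x$ acting only as a parameter. First I fix $x\in\mathbb{R}^d$ and view $L_x:=B(\p_\xi)^*A(x,\cdot)B(\p_\xi)$ as a formally self-adjoint second-order differential operator on $\square$-periodic $\mathbb{C}^n$-valued functions of $\xi$. Strong ellipticity follows from the hypotheses: for every $\eta\in\mathbb{C}^n$ and $\z\in\mathbb{R}^d\setminus\{0\}$,
\[
\eta^*B(\z)^*A(x,\xi)B(\z)\eta\geqslant c_1|B(\z)\eta|^2\geqslant c|\z|^2|\eta|^2,
\]
the second inequality coming from $\rank B(\z)=n$ by homogeneity and compactness of the unit sphere. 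Combined with the Poincar\'e inequality on mean-zero periodic functions, this yields a coercivity bound, uniform in $x$, for the bilinear form associated with $L_x$ on the subspace of $\Hper^1(\square;\mathbb{C}^n)$ consisting of functions with vanishing mean.

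Next I apply Lax--Milgram to construct, for every $\mathbf{f}(x,\cdot)\in L_2(\square;\mathbb{C}^n)$ satisfying (\ref{3.7}), a unique weak solution $\mathbf{v}(x,\cdot)$ with $\int_\square\mathbf{v}\di\xi=0$. The kernel of $L_x$ acting on $\square$-periodic functions consists exactly of constants: any mean-zero kernel element is killed by the coercivity estimate, while constants are obviously in the kernel since $B(\p_\xi)$ annihilates them. Formal self-adjointness of $L_x$ then identifies the cokernel with the kernel, so (\ref{3.7}) is also necessary for solvability, and uniqueness up to a constant follows. This settles the first two assertions.

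For H\"older regularity in $\xi$, I appeal to the Schauder estimate for elliptic systems on the torus, obtained by applying the Agmon--Douglis--Nirenberg interior Schauder estimates to the periodic extension of $\mathbf{v}$ on a ball larger than a fundamental cell and then patching. Since the coefficients of $L_x$ lie in $C^{1+\b}_{per}(\overline{\square})$ and $\mathbf{f}(x,\cdot)\in C^\b_{per}(\overline{\square})$, I obtain $\mathbf{v}(x,\cdot)\in C^{2+\b}_{per}(\overline{\square})$ with
\[
\|\mathbf{v}(x,\cdot)\|_{C^{2+\b}_{per}(\overline{\square})}\leqslant C\|\mathbf{f}(x,\cdot)\|_{C^\b_{per}(\overline{\square})},
\]
the constant $C$ being uniform in $x$ thanks to the uniform $C^{1+\b}$-bounds on $A$.

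Finally, for the dependence on the slow variable, I differentiate the equation formally in $x_i$:
\[
L_x(\p_{x_i}\mathbf{v})=\p_{x_i}\mathbf{f}-B(\p_\xi)^*(\p_{x_i}A)B(\p_\xi)\mathbf{v}.
\]
Both terms on the right have vanishing mean over $\square$ (the first because (\ref{3.7}) holds identically in $x$, the second by integration by parts in $\xi$ and periodicity), so the previous paragraphs apply to give $\p_{x_i}\mathbf{v}\in C^{2+\b}_{per}(\overline{\square})$ together with the expected bound. Iterating and combining with the difference-quotient method (for the $W^k_2$ statement) or direct induction (for the $W^k_\infty$ statement) yields the full $W^k_p(\mathbb{R}^d;C^{2+\b}_{per}(\overline{\square}))$ regularity and the estimate. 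I expect the only nonroutine ingredient to be the Schauder estimate on the torus for systems; once that is in hand, the Fredholm-theoretic and smoothness-in-$x$ steps are standard.
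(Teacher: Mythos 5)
Your argument is correct and is essentially the proof the paper delegates to Lemma~2.2 of \cite{AA}: coercivity of $B(\p_\xi)^*A(x,\cdot)B(\p_\xi)$ on mean-zero $\square$-periodic functions (from $A\geqslant c_1E_m$, the condition $\rank B(\z)=n$ for $\z\not=0$, and Parseval on the cell), the Fredholm alternative with kernel equal to the constant vectors, Schauder regularity on the torus, and differentiation of the cell problem in the slow variable with constants uniform in $x$. No changes needed.
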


This lemma is proved completely by analogy with Lemma~2.2 in
\cite{AA}.

\begin{lemma}\label{lm5.3}
For $\l$ close to $\l_0$, sufficiently small $\e$, and any
$\mathbf{f}\in L_2(\mathbb{R}^d;\mathbb{C}^n)$ the
representation
\begin{equation}\label{5.13}
(\mathcal{H}_\e-\l)^{-1}\mathbf{f}=\sum\limits_{i=1}^{N}
\frac{\boldsymbol{\Psi}_\e^{(i)}}{\l_\e^{(i)}-\l}
(\mathbf{f},\boldsymbol{\Psi}_\e^{(i)}
)_{L_2(\mathbb{R}^d;\mathbb{C}^n)}+\widetilde{\mathbf{u}}_\e,
\end{equation}
holds true, where $\boldsymbol{\Psi}_\e^{(i)}$ are the
eigenfunctions  associated  with $\l_\e^{(i)}$ and
orthonormalized in $L_2(\mathbb{R}^d;\mathbb{C}^n)$, while the
vector-function $\widetilde{\mathbf{u}}_\e$ satisfies the
uniform in $\e$ and $\l$ estimates
\begin{equation}\label{5.14}
\|\widetilde{\mathbf{u}}_\e\|_{L_2(\mathbb{R}^d;\mathbb{C}^n)}
\leqslant C\|\mathbf{f}\|_{L_2(\mathbb{R}^d;\mathbb{C}^n)},\quad
\|\widetilde{\mathbf{u}}_\e\|_{\H^2(\mathbb{R}^d;\mathbb{C}^n)}
\leqslant
C\e^{-5}\|\mathbf{f}\|_{L_2(\mathbb{R}^d;\mathbb{C}^n)}.
\end{equation}
\end{lemma}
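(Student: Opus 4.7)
The plan is to take the identity (\ref{5.13}) as the \emph{definition} of $\widetilde{\mathbf{u}}_\e$ and verify the two estimates by a short spectral-theoretic argument combined with an application of Lemma~\ref{lm2.2}. A direct computation gives
\begin{equation*}
(\mathcal{H}_\e-\l)\widetilde{\mathbf{u}}_\e=\mathbf{f}-\sum\limits_{i=1}^{N}(\mathbf{f},\boldsymbol{\Psi}_\e^{(i)})_{L_2(\mathbb{R}^d;\mathbb{C}^n)}\boldsymbol{\Psi}_\e^{(i)}=:P_\e^\perp\mathbf{f},
\end{equation*}
and $\widetilde{\mathbf{u}}_\e\perp\boldsymbol{\Psi}_\e^{(j)}$ in $L_2(\mathbb{R}^d;\mathbb{C}^n)$ for every $j$, because the sum on the right-hand side of (\ref{5.13}) is precisely the orthogonal projection of $(\mathcal{H}_\e-\l)^{-1}\mathbf{f}$ onto $\mathrm{span}\,\{\boldsymbol{\Psi}_\e^{(i)}\}$.

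For the first bound in (\ref{5.14}), I would argue by self-adjointness of $\mathcal{H}_\e$: the closed subspace $\mathcal{L}_\e:=\mathrm{span}\,\{\boldsymbol{\Psi}_\e^{(i)}\}^\perp$ reduces $\mathcal{H}_\e$, and $\widetilde{\mathbf{u}}_\e\in\mathcal{L}_\e$ solves $(\mathcal{H}_\e-\l)\widetilde{\mathbf{u}}_\e=P_\e^\perp\mathbf{f}$ \emph{inside} $\mathcal{L}_\e$. The key input is a uniform spectral gap: by \cite[Corollary~1.2]{AA} exactly $N$ eigenvalues of $\mathcal{H}_\e$ converge to $\l_0$, hence there exist a neighborhood $U$ of $\l_0$ and $\e_0>0$ such that $\spec(\mathcal{H}_\e)\cap U=\{\l_\e^{(1)},\ldots,\l_\e^{(N)}\}$ for $\e\in(0,\e_0]$. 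Then for $\l$ in a smaller fixed neighborhood of $\l_0$ the restriction $(\mathcal{H}_\e-\l)|_{\mathcal{L}_\e}$ has spectrum bounded away from zero by some $d>0$ uniformly in $\e$ and $\l$, and the functional calculus for self-adjoint operators yields
\begin{equation*}
\|\widetilde{\mathbf{u}}_\e\|_{L_2(\mathbb{R}^d;\mathbb{C}^n)}\leqslant d^{-1}\|P_\e^\perp\mathbf{f}\|_{L_2(\mathbb{R}^d;\mathbb{C}^n)}\leqslant d^{-1}\|\mathbf{f}\|_{L_2(\mathbb{R}^d;\mathbb{C}^n)}.
\end{equation*}

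The second bound in (\ref{5.14}) is then immediate from Lemma~\ref{lm2.2}: the relation $\mathcal{H}_\e\widetilde{\mathbf{u}}_\e=\l\widetilde{\mathbf{u}}_\e+P_\e^\perp\mathbf{f}$ together with the $L_2$ bound just obtained gives $\|\mathcal{H}_\e\widetilde{\mathbf{u}}_\e\|_{L_2(\mathbb{R}^d;\mathbb{C}^n)}\leqslant C\|\mathbf{f}\|_{L_2(\mathbb{R}^d;\mathbb{C}^n)}$ uniformly in $\e$ and $\l$, so Lemma~\ref{lm2.2} upgrades this to the desired $\H^2$-estimate with the factor $\e^{-5}$.

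The step I expect to be the main obstacle is the uniform spectral gap, i.e.\ the verification that the portion of $\spec(\mathcal{H}_\e)$ lying outside an $\e$-independent neighborhood of $\{\l_\e^{(i)}\}$ stays a fixed positive distance from $\l_0$. Informally this is the content of \cite[Corollary~1.2]{AA} (counting multiplicities), but a clean justification should proceed through a norm-resolvent type convergence of $\mathcal{H}_\e$ to $\mathcal{H}_0$ in a neighborhood of $\l_0$, which rules out both accumulation of extra point spectrum at $\l_0$ and a creeping of essential spectrum. Once this is extracted from \cite{AA}, the rest of the argument is the routine spectral decomposition sketched above.
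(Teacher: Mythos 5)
Your argument is correct and arrives at both estimates in (\ref{5.14}), but by a route that differs from the paper's in one step. The paper obtains the representation (\ref{5.13}) from Kato's expansion of the resolvent near an isolated group of eigenvalues (Ch.~V, \S 3.5, formula (3.21)), which also gives holomorphy of $\widetilde{\mathbf{u}}_\e$ in $\l$; it then identifies $\widetilde{\mathbf{u}}_\e=(\mathcal{H}_\e-\l)^{-1}\widetilde{\mathbf{f}}$ with $\widetilde{\mathbf{f}}=P_\e^\perp\mathbf{f}$ (your identity read backwards), proves the $L_2$ bound only on the circle $|\l-\l_0|=\d$ via the elementary resolvent estimate $\|(\mathcal{H}_\e-\l)^{-1}\|\leqslant 1/\dist(\l,\spec(\mathcal{H}_\e))$, and finally extends the bound to the interior of the disc by the maximum modulus principle applied to the $\l$-holomorphic function $\widetilde{\mathbf{u}}_\e$. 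You instead pass to the reducing subspace $\mathcal{L}_\e$ and bound the reduced resolvent directly for every $\l$ in the disc; this exploits self-adjointness more strongly and dispenses with the holomorphy/maximum-modulus step (which would be the way to go if $\mathcal{H}_\e$ were not self-adjoint). Both proofs hinge on exactly the same nontrivial input, which you correctly isolate as the main obstacle: by \cite[Corollary~1.2]{AA} the spectrum of $\mathcal{H}_\e$ in a fixed, $\e$-independent neighbourhood of $\l_0$ consists precisely of the $N$ eigenvalues $\l_\e^{(i)}$ counted with multiplicity, so the rest of the spectrum keeps a fixed positive distance from that neighbourhood; the paper's statement $\dist\big(\discspec(\mathcal{H}_\e),\{\l:|\l-\l_0|=\d\}\big)\geqslant\d/2$ is the same fact, and the paper is no more detailed than you are in extracting it from \cite{AA}. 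The closing step --- feeding $\mathcal{H}_\e\widetilde{\mathbf{u}}_\e=\l\widetilde{\mathbf{u}}_\e+P_\e^\perp\mathbf{f}$ together with the $L_2$ bound into Lemma~\ref{lm2.2} to obtain the $\H^2$ estimate with the factor $\e^{-5}$ --- is identical in both arguments.
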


\begin{proof}
The representation (\ref{5.13}) follows from \cite[Ch. V, \S
3.5, Formula (3.21)]{K}, where the vector-function
$\widetilde{\mathbf{u}}_\e$ is holomorphic w.r.t. $\l$ close to
$\l_0$ in the norm of $L_2(\mathbb{R}^d;\mathbb{C}^n)$. One can
make sure that
\begin{equation}\label{5.15}
\widetilde{\mathbf{u}}_\e=(\mathcal{H}_\e-\l)^{-1}
\widetilde{\mathbf{f}},\quad
\widetilde{\mathbf{f}}:=\mathbf{f}-\sum\limits_{i=1}^{N}
(\mathbf{f},\boldsymbol{\Psi}_\e^{(i)})_{L_2(\mathbb{R}^d;\mathbb{C}^n)}
\mathbf{\Psi}_\e^{(i)}.
\end{equation}
Let $\d$ be a sufficiently small fixed number such that
$\discspec(\mathcal{H}_0)\cap\{\l:
|\l-\l_0|\leqslant\d\}=\{\l_0\}$. By the convergences
$\l_\e^{(i)}\to\l_0$ for all $\e$ small enough we hence have
$\dist\big(\discspec(\mathcal{H}_\e),\{\l:
|\l-\l_0|=\d\}\big)\geqslant \d/2$. This inequality by \cite[Ch.
V, \S 3.5, Formula (3.16)]{K} and (\ref{5.15}) implies the
former of the estimates in (\ref{5.14}) for $|\l-\l_0|=\d$,
where the constant $C$ is independent of $\e$ and $\l$. By the
maximum modulus principle for the holomorphic functions  we
conclude that this estimate is valid for $|\l-\l_0|<\d$ as well.
The former estimate in (\ref{5.14}) follows now from
Lemma~\ref{lm2.2}.
\end{proof}

\section{Proof of Theorem~\ref{th1.3}}

First we  construct formally the asymptotic expansions for the
eigenvalues and the eigenfunctions. Then the justification of
these asymptotics will be adduced.

We employ the two-scale method \cite{BP} in formal constructing.
The asymptotics for the eigenvalues of $\mathcal{H}_\e$
converging to $\l_0$ are constructed as the series (\ref{1.16}),
while the asymptotics for the associated eigenfunctions are
sought as the series (\ref{1.18}). The aim of the formal
construction is to determine the coefficients of the series
(\ref{1.16}), (\ref{1.18}). The vector-functions
$\Psi_j^{(i)}=\Psi_j^{(i)}(x,\xi)$, $j\geqslant 1$, are sought
to be $\square$-periodic in $\xi$, and fast decaying as
$|x|\to+\infty$. We will specify their smoothness and behavior
at infinity in more details during the constructing.

We substitute the series (\ref{1.16}), (\ref{1.18}) into the
equation $\mathcal{H}_\e\boldsymbol{\psi}_\e^{(i)}=
\l_\e^{(i)}\boldsymbol{\psi}_\e^{(i)}$ and collect the
coefficients of the same powers of $\e$. As a result, we arrive
at the series of the equations
\begin{equation}\label{5.1}
B(\p_\xi)^*A B(\p_\xi)\boldsymbol{\Psi}_{j+2}^{(i)}=
-\mathcal{K}_{-1}\boldsymbol{\Psi}_{j+1}^{(i)}-
\mathcal{K}_0\boldsymbol{\Psi}_j^{(i)}+\l_0
\boldsymbol{\Psi}_j^{(i)} +
\sum\limits_{k=1}^{j}\l_k^{(i)}\boldsymbol{\Psi}_{j-k}^{(i)},
\quad (x,\xi)\in\mathbb{R}^{2d},
\end{equation}
where $j\geqslant -1$, $A=A(x,\xi)$, $V=V(x,\xi)$,
$\boldsymbol{\Psi}_j^{(i)}=\boldsymbol{\Psi}_j^{(i)}(x,\xi)$,
$q\geqslant 1$, $\boldsymbol{\Psi}_{j}^{(i)}\equiv0$, $j<0$. For
$j=-1$ the equation (\ref{5.1}) casts into
\begin{equation*}
B(\p_\xi)^*A B(\p_\xi)\boldsymbol{\Psi}_1^{(i)}=
-\mathcal{K}_{-1}\boldsymbol{\Psi}_0^{(i)},\quad
(x,\xi)\in\mathbb{R}^{2d}.
\end{equation*}
As it follows from (\ref{1.10}) and the definition of
$\mathcal{K}_{-1}$, a $\square$-periodic w.r.t. $\xi$ solution
of this equation is given by
\begin{equation}\label{5.2}
\boldsymbol{\Psi}_1^{(i)}(x,\xi)=\boldsymbol{\Upsilon}_1^{(i)}(x,\xi)+
\boldsymbol{\phi}_1^{(i)}(x),\quad
\boldsymbol{\Upsilon}_1^{(i)}:=(\L_1
B(\p)+\L_0)\boldsymbol{\Psi}_i^{(0)},
\end{equation}
where $\boldsymbol{\phi}_1^{(i)}$ is a vector-function to be
determined.

It follows from Lemma~\ref{lm3.0} that $\L_i\in
\mathcal{W}(\mathbb{R}^d;C_{per}^{2+\b}(\overline{\square}))$,
and thus the coefficients of $\mathcal{H}_0$ belong to
$\mathcal{W}(\mathbb{R}^d)$. Employing this fact and
differentiating the equation
$(\mathcal{H}_0-\l_0)\Psi_0^{(i)}=0$, one can easily make sure
that $\boldsymbol{\Psi}_0^{(i)}\in
\H^\infty(\mathbb{R}^d;\mathbb{C}^n)$, where
$\H^\infty(\mathbb{R}^d;\mathbb{C}^n):=\bigcap\limits_{k=1}^\infty
\H^k(\mathbb{R}^d;\mathbb{C}^n)$. It implies that
$\boldsymbol{\Upsilon}_1^{(i)}\in
\H^\infty(\mathbb{R}^d;C^{2+\b}_{per}(\overline{\square}))$.

We substitute (\ref{5.2}) into (\ref{5.1}) for $j=0$ to obtain
\begin{equation}\label{5.3}
B(\p_\xi)^* A B(\p_\xi)\boldsymbol{\Psi}_2^{(i)}
=-\mathcal{K}_{-1}\boldsymbol{\Upsilon}_1^{(i)}-
\mathcal{K}_0\boldsymbol{\Psi}_0^{(i)}
+\l_0\boldsymbol{\Psi}_0^{(i)} -
\mathcal{K}_{-1}\boldsymbol{\phi}_1^{(i)}, \quad
(x,\xi)\in\mathbb{R}^{2d}.
\end{equation}
In accordance with Lemma~\ref{lm3.0} the equation is uniquely
solvable in the class of $\square$-periodic w.r.t. $\xi$
vector-functions, if the solvability condition (\ref{3.7}) holds
true. In view of the identities
\begin{equation}
\begin{aligned}
&\int\limits_\square (B(\p_\xi)\L_0)^*A \di\xi=
\sum\limits_{i=1}^{d}\int\limits_\square a_i b_i
\frac{\p\L_1}{\p\xi_i}\di\xi,
\\
&\int\limits_\square  \big(B(\p_\xi)\L_0\big)^* A
B(\p_\xi)\L_0\di\xi=-\sum\limits_{i=1}^{d}\int\limits_\square
a_i b_i \frac{\p\L_0}{\p\xi_i}\di\xi,
\end{aligned}\label{3.9a}
\end{equation}
established in \cite{AA}, it is easy to check that this
solvability condition leads us to the equation
$(\mathcal{H}_0-\l_0\boldsymbol{)\Psi}_0^{(i)}=0$, which holds
true by the definition of $\Psi_0^{(i)}$. Hence, the
vector-function $\boldsymbol{\Psi}_2^{(i)}$ reads as follows
\begin{equation}\label{5.4}
\boldsymbol{\Psi}_2^{(i)}(x,\xi)=
\boldsymbol{\Upsilon}_2^{(i)}(x,\xi)+
(\L_1(x,\xi)B(\p)+\L_0(x,\xi))
\boldsymbol{\phi}_1^{(i)}(x)+\boldsymbol{\phi}_2^{(i)}(x),
\end{equation}
where  $\boldsymbol{\phi}_2^{(i)}$ is a vector-function to be
determined, and $\boldsymbol{\Upsilon}_2^{(i)}$  are
$\square$-periodic w.r.t. $\xi$ solutions to the equation
\begin{equation}\label{5.5}
B(\p_\xi)^* A B(\p_\xi)\boldsymbol{\Upsilon}_2^{(i)}=
-\mathcal{K}_{-1}\boldsymbol{\Upsilon}_1^{(i)}-
\mathcal{K}_0\boldsymbol{\Psi}_0^{(i)}
+\l_0\boldsymbol{\Psi}_0^{(i)}, \quad (x,\xi)\in\mathbb{R}^{2d},
\end{equation}
and satisfy (\ref{3.7}).  The equation is uniquely solvable,
since the right hand side of (\ref{5.3}) and the vector-function
$\mathcal{K}_{-1}\boldsymbol{\phi}_1^{(i)}$ satisfy (\ref{3.7}).
By Lemma~\ref{lm3.0},
$\boldsymbol{\Upsilon}_2^{(i)}\in\H^\infty(\mathbb{R}^d;
C^{2+\b}_{per}(\overline{\square}))$.

We substitute now (\ref{5.2}), (\ref{5.4}) into (\ref{5.1}) with
$j=1$,
\begin{equation}\label{5.8}
\begin{aligned}
B(\p_\xi)^* A B(\p_\xi)&\boldsymbol{\Psi}_3^{(i)}=
-\mathcal{K}_{-1}\boldsymbol{\Upsilon}_2^{(i)}
-\mathcal{K}_0\boldsymbol{\Upsilon}_1^{(i)}+
\l_0\boldsymbol{\Upsilon}_1^{(i)} - \mathcal{K}_{-1}(\L_1
B(\p_x)+\L_0)\boldsymbol{\phi}_1^{(i)}
\\
&- \mathcal{K}_0\boldsymbol{\phi}_1^{(i)}+
\l_0\boldsymbol{\phi}_1^{(i)}+
\l_1^{(i)}\boldsymbol{\Psi}_0^{(i)}-
\mathcal{K}_{-1}\boldsymbol{\phi}_2^{(i)}, \quad
(x,\xi)\in\mathbb{R}^{2d}.
\end{aligned}
\end{equation}
We write down the solvability condition (\ref{3.7}) for the
equation and take into account (\ref{3.9a}). It leads us to the
equation for $\boldsymbol{\phi}_1^{(i)}$,
\begin{equation}\label{5.6}
(\mathcal{H}_0-\l_0)\boldsymbol{\phi}_1^{(i)}
=\l_1^{(i)}\boldsymbol{\Psi}_0^{(i)} -\frac{1}{|\square|}
\int\limits_\square(\mathcal{K}_{-1}\boldsymbol{\Upsilon}_2^{(i)}+
\mathcal{K}_0\boldsymbol{\Upsilon}_1^{(i)})\di\xi.
\end{equation}
The right hand side of this equation is an element of
$\H^\infty(\mathbb{R}^d;\mathbb{C}^n)$. Since $\l_0$ is an
isolated eigenvalue of $\mathcal{H}_0$, the equation is solvable
in $\H^2(\mathbb{R}^d;\mathbb{C}^n)$, if and only if
\begin{equation}\label{5.7}
\left(\l_1^{(i)}\boldsymbol{\Psi}_0^{(i)} -\frac{1}{|\square|}
\int\limits_\square(\mathcal{K}_{-1}\boldsymbol{\Upsilon}_2^{(i)}+
\mathcal{K}_0\boldsymbol{\Upsilon}_1^{(i)})\di\xi,\boldsymbol{\Psi}_0^{(k)}
\right)_{L_2(\mathbb{R}^d;\mathbb{C}^n)}=0,\quad k=1,\ldots,N.
\end{equation}
If these conditions hold, the solution to (\ref{5.6}) is defined
uniquely up to a linear combination of
$\boldsymbol{\Psi}_0^{(i)}$.

\begin{lemma}\label{lm5.1}
The identities
\begin{equation*}
\frac{1}{|\square|}
(\mathcal{K}_{-1}\boldsymbol{\Upsilon}_2^{(i)}+
\mathcal{K}_0\boldsymbol{\Upsilon}_1^{(i)},\boldsymbol{\Psi}_0^{(k)}
)_{L_2(\mathbb{R}^d\times\square;\mathbb{C}^n)}=
\begin{cases}
0, & i\not=k,
\\
\tau_i,& i=k,
\end{cases}
\end{equation*}
hold true.
\end{lemma}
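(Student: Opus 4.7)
The plan is to identify the left-hand side, after multiplying by $|\square|$, with the $(i,k)$-entry of the Hermitian form defining $T$ evaluated on the new basis $\{\boldsymbol{\Psi}_0^{(j)}\}_{j=1}^N$ in place of $\{\boldsymbol{\psi}_0^{(j)}\}_{j=1}^N$. Since $S_0 T S_0^* = \operatorname{diag}(\tau_1,\dots,\tau_N)$ by construction, this will yield the claim immediately.

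Two ingredients drive the derivation. First, both $\mathcal{K}_{-1}$ and $\mathcal{K}_0$ are formally self-adjoint on $L_2(\mathbb{R}^d\times\square;\mathbb{C}^n)$ with $\square$-periodic boundary conditions in $\xi$: the terms $B(\p_\xi)^*A B(\p_x)$ and $B(\p_x)^*A B(\p_\xi)$ are mutual adjoints since $A$ is Hermitian, while the first-order operator $a(x,\xi,\p_\xi)=\sum_i(a_i\p_{\xi_i}b_i-b_i^*\p_{\xi_i}a_i^*)$ is built so that its two summands are minus-adjoints of one another, forcing overall symmetry; $\mathcal{K}_0$ is handled analogously. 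Second, the equation (\ref{5.1}) at $j=-1$, combined with the splitting (\ref{5.2}) and $B(\p_\xi)\boldsymbol{\phi}_1^{(k)}=0$, yields $B(\p_\xi)^*A B(\p_\xi)\boldsymbol{\Upsilon}_1^{(k)}=-\mathcal{K}_{-1}\boldsymbol{\Psi}_0^{(k)}$.

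Starting from $(\mathcal{K}_{-1}\boldsymbol{\Upsilon}_2^{(i)},\boldsymbol{\Psi}_0^{(k)})_{L_2(\mathbb{R}^d\times\square;\mathbb{C}^n)}$, I transfer $\mathcal{K}_{-1}$ onto $\boldsymbol{\Psi}_0^{(k)}$ by self-adjointness, substitute the displayed identity for $\mathcal{K}_{-1}\boldsymbol{\Psi}_0^{(k)}$, move $B(\p_\xi)^*A B(\p_\xi)$ back onto $\boldsymbol{\Upsilon}_2^{(i)}$ by its self-adjointness, and apply equation (\ref{5.5}). This produces
\[
(\mathcal{K}_{-1}\boldsymbol{\Upsilon}_2^{(i)},\boldsymbol{\Psi}_0^{(k)})=(\mathcal{K}_{-1}\boldsymbol{\Upsilon}_1^{(i)},\boldsymbol{\Upsilon}_1^{(k)})+(\mathcal{K}_0\boldsymbol{\Psi}_0^{(i)},\boldsymbol{\Upsilon}_1^{(k)})-\l_0(\boldsymbol{\Psi}_0^{(i)},\boldsymbol{\Upsilon}_1^{(k)}).
\]
The last term on the right vanishes by the normalization (\ref{1.11}): integrating first in $\xi$ and using that $\boldsymbol{\Psi}_0^{(i)}$ is $\xi$-independent reduces it to $\int_\square(\L_1 B(\p_x)+\L_0)\boldsymbol{\Psi}_0^{(k)}\,\di\xi=0$. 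Adding the remaining summand $(\mathcal{K}_0\boldsymbol{\Upsilon}_1^{(i)},\boldsymbol{\Psi}_0^{(k)})$ coming from $\mathcal{K}_{-1}\boldsymbol{\Upsilon}_2^{(i)}+\mathcal{K}_0\boldsymbol{\Upsilon}_1^{(i)}$, and rewriting $(\mathcal{K}_0\boldsymbol{\Psi}_0^{(i)},\boldsymbol{\Upsilon}_1^{(k)})=(\boldsymbol{\Psi}_0^{(i)},\mathcal{K}_0\boldsymbol{\Upsilon}_1^{(k)})$ by self-adjointness of $\mathcal{K}_0$, upon division by $|\square|$ I obtain precisely the $(i,k)$-entry of $T$ written in the basis $\{\boldsymbol{\Psi}_0^{(j)}\}$, which equals $(S_0 T S_0^*)_{ik}=\tau_i\delta_{ik}$.

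The main technical obstacle is the formal self-adjointness of $\mathcal{K}_{-1}$ and the accompanying justification of each integration by parts. Verifying symmetry of $a(x,\xi,\p_\xi)$ needs a short computation with non-commuting matrix-valued coefficients in which the order of multiplication and the specific signs in the definition are essential. The $\xi$-boundary terms vanish by $\square$-periodicity; the $x$-boundary terms vanish because $\boldsymbol{\Psi}_0^{(j)}\in\H^\infty(\mathbb{R}^d;\mathbb{C}^n)$ and $\boldsymbol{\Upsilon}_1^{(i)},\boldsymbol{\Upsilon}_2^{(i)}\in\H^\infty(\mathbb{R}^d;C_{per}^{2+\b}(\overline{\square}))$ with sufficient decay as $|x|\to\infty$.
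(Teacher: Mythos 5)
Your proposal is correct and follows essentially the same route as the paper: transfer $\mathcal{K}_{-1}$ onto $\boldsymbol{\Psi}_0^{(k)}$ by formal self-adjointness, use the cell equation for $\boldsymbol{\Upsilon}_1^{(k)}$ and then (\ref{5.5}) for $\boldsymbol{\Upsilon}_2^{(i)}$, kill the $\l_0$-term via (\ref{1.11}), and recognize the result as $(S_0TS_0^*)_{ik}=\tau_i\d_{ik}$. The only difference is that you spell out the vanishing of the $\l_0(\boldsymbol{\Psi}_0^{(i)},\boldsymbol{\Upsilon}_1^{(k)})$ term and the self-adjointness of $a(x,\xi,\p_\xi)$ more explicitly than the paper does.
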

\begin{proof}
Integrating by parts and taking into account the equations
(\ref{1.10}), (\ref{5.5}) and the conditions (\ref{1.11}), we
obtain
\begin{align*}
&(\mathcal{K}_{-1}\boldsymbol{\Upsilon}_2^{(i)},
\boldsymbol{\Psi}_0^{(k)})_{L_2
(\mathbb{R}^d\times\square;\mathbb{C}^n)}
=(\boldsymbol{\Upsilon}_2^{(i)},\mathcal{K}_{-1}
\boldsymbol{\Psi}_0^{(k)})_{L_2
(\mathbb{R}^d\times\square;\mathbb{C}^n)}
\\
&=-(\boldsymbol{\Upsilon}_2^{(i)}, B(\p_\xi)^*A
B(\p_\xi)\boldsymbol{\Upsilon}_1^{(k)})_{L_2
(\mathbb{R}^d\times\square;\mathbb{C}^n)}=-(B(\p_\xi)^*A
B(\p_\xi)\boldsymbol{\Upsilon}_2^{(i)},
\boldsymbol{\Upsilon}_1^{(k)})_{L_2
(\mathbb{R}^d\times\square;\mathbb{C}^n)}
\\
&=(\mathcal{K}_{-1}\boldsymbol{\Upsilon}_1^{(i)},
\boldsymbol{\Upsilon}_1^{(k)})_{L_2
(\mathbb{R}^d\times\square;\mathbb{C}^n)}+
(\boldsymbol{\Psi}_0^{(i)}, \mathcal{K}_{0}
\boldsymbol{\Upsilon}_1^{(k)})_{L_2
(\mathbb{R}^d\times\square;\mathbb{C}^n)}.
\end{align*}
By the definition of $T$ and $\boldsymbol{\Psi}_0^{(i)}$ it
proves the lemma.
\end{proof}

In view of the definition of $\boldsymbol{\Psi}_i^{(0)}$ the
identities (\ref{5.7}) hold true, if the numbers $\l_1^{(i)}$
are chosen in accordance with (\ref{1.17}). The corresponding
solution of the equation (\ref{5.6}) reads as follows
\begin{equation}\label{5.9}
\boldsymbol{\phi}_1^{(i)}=\widetilde{\boldsymbol{\phi}}_1^{(i)}
+\sum\limits_{p=1}^{N}
S_{ik}^{(1)}\boldsymbol{\Psi}_0^{(k)},\quad
\big(\widetilde{\boldsymbol{\phi}}_1^{(i)},
\boldsymbol{\Psi}_0^{(k)}\big)_{L_2(\mathbb{R}^d;\mathbb{C}^n)}=0,
\quad k=1,\ldots,N
\end{equation}
where $S_{ip}^{(1)}$ are numbers, and
$\widetilde{\boldsymbol{\phi}}_1^{(i)}\in
\H^\infty(\mathbb{R}^d;\mathbb{C}^n)$. The latter belonging can
be justified easily by differentiating (\ref{5.6}). The solution
of (\ref{5.8}) is given by
\begin{equation}\label{5.10}
\boldsymbol{\Psi}_3^{(i)}(x,\xi)=\boldsymbol{\Phi}_3^{(i)}(x,\xi)
+(\L_1(x,\xi)B(\p_x)+\L_0(x,\xi))\boldsymbol{\phi}_2^{(i)}(x)+
\sum\limits_{k=1}^{N}
S_{ik}^{(1)}\boldsymbol{\Upsilon}_2^{(k)}(x,\xi)+
\boldsymbol{\phi}_3^{(i)}(x),
\end{equation}
where $\boldsymbol{\phi}_3^{(i)}$ is the vector-function, and
$\boldsymbol{\Phi}_3^{(i)}\in
\H^\infty(\mathbb{R}^d;C^{2+\b}_{per}(\overline{\square}))$ is a
solution to
\begin{align*}
B(\p_\xi)^* A B(\p_\xi)\boldsymbol{\Phi}_3^{(i)}=&
-\mathcal{K}_{-1}\boldsymbol{\Upsilon}_2^{(i)} -
\mathcal{K}_0\boldsymbol{\Upsilon}_1^{(i)}+
\l_0\boldsymbol{\Upsilon}_1^{(i)}- \mathcal{K}_{-1}(\L_1
B(\p_x)+\L_0)\widetilde{\boldsymbol{\phi}}_1^{(i)}
\\
&-\mathcal{K}_0\widetilde{\boldsymbol{\phi}}_1^{(i)}+
\l_0\widetilde{\boldsymbol{\phi}}_1^{(i)}+
\l_1^{(i)}\boldsymbol{\Psi}_0^{(i)}, \quad
(x,\xi)\in\mathbb{R}^{2d}.
\end{align*}

Let us show how to determine $S_{ip}^{(1)}$ and $\l_2^{(i)}$. We
substitute (\ref{5.2}), (\ref{5.4}), (\ref{5.9}), (\ref{5.10})
into (\ref{5.1}) with $j=2$. The solvability condition
(\ref{3.7}) for this equation yields
\begin{align}
&
\begin{aligned}
(\mathcal{H}_0-\l_0)\boldsymbol{\phi}_2^{(i)}=&\l_1^{(i)}
\sum\limits_{p=1}^{N}S_{ip}^{(1)}\boldsymbol{\Psi}_0^{(p)}
-\frac{1}{|\square|}\sum\limits_{k=1}^{N} S_{ik}^{(1)}
\int\limits_\square(\mathcal{K}_{-1}\boldsymbol{\Upsilon}_2^{(k)}+
\mathcal{K}_0\boldsymbol{\Upsilon}_1^{(k)})\di\xi
\\
&+\l_1^{(i)}\widetilde{\boldsymbol{\phi}}_1^{(i)}
+\l_2^{(i)}\boldsymbol{\Psi}_0^{(i)}-\mathbf{g}_2^{(i)},\quad
x\in\mathbb{R}^d,
\end{aligned}\label{5.11}
\\
& \mathbf{g}_2^{(i)}:= \frac{1}{|\square|} \int\limits_\square
\big(\mathcal{K}_{-1}\boldsymbol{\Phi}_3^{(i)}+\mathcal{K}_0
\boldsymbol{\Upsilon}_2^{(i)}+ \mathcal{K}_0(\L_1
B(\p_x)+\L_0)\widetilde{\boldsymbol{\phi}}_1^{(i)}\big)\di\xi.
\nonumber
\end{align}
The right hand side of the obtained equation belongs to
$\H^\infty(\mathbb{R}^d;\mathbb{C}^n)$. Now we write the
solvability condition for (\ref{5.11}) and take into account
Lemma~\ref{lm5.1} and (\ref{1.17}). It leads us to
\begin{equation*}
(\tau_1^{(i)}-\tau_1^{(k)})S_{ik}^{(1)}+ \l_2^{(i)}\d_{ik}
-(\mathbf{g}_2^{(i)},\boldsymbol{\Psi}_0^{(k)})_{L_2(\mathbb{R}^d;\mathbb{C}^n)}=0,\quad
k=1,\ldots,N,
\end{equation*}
where $\d_{ik}$ is the Kronecker delta. By the assumption
$\tau_i\not=\tau_j$, $i\not=j$, it implies that
\begin{equation}\label{5.12}
S_{ik}^{(1)}=\frac{(\mathbf{g}_2^{(i)},
\boldsymbol{\Psi}_0^{(k)})_{L_2(\mathbb{R}^d;\mathbb{C}^n)}}
{\tau_i-\tau_k},\quad k\not=i,\qquad
\l_2^{(i)}=(\mathbf{g}_2^{(i)},
\boldsymbol{\Psi}_0^{(i)})_{L_2(\mathbb{R}^d;\mathbb{C}^n)}.
\end{equation}
Without loss of generality we let $S_{ii}^{(1)}=0$. The formulas
(\ref{1.19}) are proven.

The construction of the other terms of the series (\ref{1.16}),
(\ref{1.18}) is carried out by the same scheme. The results are
in the following lemma which can be easily proved  by induction.
\begin{lemma}\label{lm5.2}
There exist unique solutions to (\ref{5.1}) and the uniquely
determined numbers $\l_{j}^{(i)}$, which read as follows
\begin{align*}
\boldsymbol{\Psi}_j^{(i)}(x,\xi)=&\boldsymbol{\Phi}_j^{(i)}(x,\xi)
+(\L_1(x,\xi)B(\p_x)+
\L_0(x,\xi))\boldsymbol{\phi}_{j-1}^{(i)}(x)
\\
&+ \sum\limits_{k=1}^{N}
S_{ik}^{(j-2)}\boldsymbol{\Upsilon}_k^{(2)}(x,\xi)+
\boldsymbol{\phi}_j^{(i)}(x),
\\
\boldsymbol{\phi}_j^{(i)}(x)=&
\widetilde{\boldsymbol{\phi}}_j^{(i)}(x)+ \sum\limits_{k=1}^{N}
S_{ik}^{(j)}\boldsymbol{\Psi}_0^{(k)}(x),
\end{align*}
where $\boldsymbol{\Phi}_j^{(i)}\in\H^\infty(\mathbb{R}^d;
C^{2+\b}_{per}(\overline{\square}))$ are the solutions to
\begin{align*}
B(\p_\xi)^*A
B(\p_\xi)\boldsymbol{\Phi}_j^{(i)}=&-\mathcal{K}_{-1}
\left(\boldsymbol{\Phi}_{j-1}^{(i)}+(\L_1 B(\p_x)+\L_0)
\widetilde{\boldsymbol{\phi}}_{j-2}^{(i)}+\sum\limits_{k=1}^{N}
S_{ik}^{(j-3)}\boldsymbol{\Upsilon}_2^{(k)}\right)
\\
&-\mathcal{K}_0 \left(\boldsymbol{\Phi}_{j-2}^{(i)}+(\L_1
B(\p_x)+\L_0)
\widetilde{\boldsymbol{\phi}}_{j-3}^{(i)}+\sum\limits_{k=1}^{N}
S_{ik}^{(j-4)}\boldsymbol{\Upsilon}_2^{(k)}+
\widetilde{\boldsymbol{\phi}}_{j-2}^{(i)}\right)
\\
&+\sum\limits_{k=0}^{j-2}\l_k^{(i)}\boldsymbol{\Psi}_{j-k-2},\quad
(x,\xi)\in\mathbb{R}^{2d},
\end{align*}
and satisfy (\ref{3.7}), $\l_0^{(i)}:=\l_0$. The
vector-functions $\widetilde{\boldsymbol{\phi}}_j^{(i)}
\in\H^\infty(\mathbb{R}^d;\mathbb{C}^n)$ are the solutions to
the equations
\begin{align*}
&(\mathcal{H}_0-\l_0)\widetilde{\boldsymbol{\phi}}_j^{(i)}
=\l_1^{(i)}
\sum\limits_{k=1}^{N}S_{ik}^{(j)}\boldsymbol{\Psi}_0^{(k)} -
\frac{1}{|\square|}\sum\limits_{k=1}^{N} S_{ik}^{(j)}
\int\limits_\square(\mathcal{K}_{-1}\boldsymbol{\Upsilon}_2^{(i)}+
\mathcal{K}_0\boldsymbol{\Upsilon}_1^{(i)})\di\xi
\\
&\hphantom{(\mathcal{H}_0-\l_0)\widetilde{\boldsymbol{\phi}}_j^{(i)}
=} +\l_j^{(i)}\boldsymbol{\Psi}_0^{(i)}+\sum\limits_{k=2}^{j-1}
\l_k^{(i)}\boldsymbol{\phi}_{j-k}^{(1)}- \mathbf{g}_j^{(i)},
\\
&\mathbf{g}_{j}^{(i)}:=\frac{1}{|\square|}\int\limits_\square
\left(\mathcal{K}_{-1}\Phi_{j+1}^{(i)}+\mathcal{K}_0 \big(
\Phi_{j+2}^{(i)}+(\L_1 B(\p_x)+\L_0)
\widetilde{\boldsymbol{\phi}}_{j-1}^{(i)}\big)+
\sum\limits_{k=1}^{N}S_{ik}^{(j-2)}\boldsymbol{\Upsilon}_2^{(k)}\right)
\di\xi,
\end{align*}
being orthogonal to $\boldsymbol{\Psi}_0^{(k)}$, $k=1,\ldots,N$,
in $L_2(\mathbb{R}^d;\mathbb{C}^n)$. The numbers $S_{ik}^{(j)}$
and $\l_j^{(i)}$ are determined by
\begin{equation*}
S_{ik}^{(j)}=\frac{(\mathbf{g}_j^{(i)},
\boldsymbol{\Psi}_0^{(k)})_{L_2(\mathbb{R}^d;\mathbb{C}^n)}}
{\tau_i-\tau_k},\quad i\not=k, \qquad S_{ii}^{(j)}=0,
\qquad\l_j^{(i)}=(\mathbf{g}_j^{(i)},
\boldsymbol{\Psi}_0^{(i)})_{L_2(\mathbb{R}^d;\mathbb{C}^n)}.
\end{equation*}
\end{lemma}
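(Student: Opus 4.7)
The plan is to proceed by induction on $j$, imitating the explicit construction performed above for $j=1,2,3$ in (\ref{5.2})--(\ref{5.12}); the base of the induction is precisely that construction. For the inductive step, assume that all objects with indices strictly less than $j$ have been determined, belong to the stated function spaces, and satisfy the asserted recursive relations.

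First I would substitute the known expressions into (\ref{5.1}) and observe that the right hand side depends only on previously constructed data. Apply Lemma~\ref{lm3.0} to solve the resulting $\xi$-problem, obtaining $\boldsymbol{\Phi}_j^{(i)}\in\H^\infty(\mathbb{R}^d;C^{2+\b}_{per}(\overline{\square}))$, unique modulo a constant-in-$\xi$ vector-function. That remaining ambiguity is split off as $(\L_1 B(\p_x)+\L_0)\boldsymbol{\phi}_{j-1}^{(i)}+\sum_k S_{ik}^{(j-2)}\boldsymbol{\Upsilon}_2^{(k)}+\boldsymbol{\phi}_j^{(i)}$, matching the claimed form for $\boldsymbol{\Psi}_j^{(i)}$. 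The solvability condition (\ref{3.7}) for this $\xi$-problem, after applying the identities (\ref{3.9a}) exactly as in the passage from (\ref{5.3}) to (\ref{5.6}), produces a PDE of the form $(\mathcal{H}_0-\l_0)\widetilde{\boldsymbol{\phi}}_\ell^{(i)}=\cdots$ at an appropriate lower index $\ell$, with right hand side in $\H^\infty(\mathbb{R}^d;\mathbb{C}^n)$ by the inductive hypothesis.

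Next I would apply the Fredholm alternative for $\mathcal{H}_0-\l_0$ on $\H^2(\mathbb{R}^d;\mathbb{C}^n)$, using that $\l_0$ is an isolated eigenvalue of multiplicity $N$ with eigenspace spanned by $\boldsymbol{\Psi}_0^{(1)},\ldots,\boldsymbol{\Psi}_0^{(N)}$. The $N$ resulting orthogonality conditions reduce---via integration by parts in the spirit of the proof of Lemma~\ref{lm5.1}, invoking (\ref{1.10}) and (\ref{5.5}) on the terms containing $\boldsymbol{\Upsilon}_2^{(k)}$---to a linear system of the form $(\tau_i-\tau_k)S_{ik}^{(\ell)}+\l_\ell^{(i)}\d_{ik}=(\mathbf{g}_\ell^{(i)},\boldsymbol{\Psi}_0^{(k)})_{L_2(\mathbb{R}^d;\mathbb{C}^n)}$, once all lower-order contributions are absorbed into $\mathbf{g}_\ell^{(i)}$. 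The standing assumption $\tau_i\neq\tau_k$ for $i\neq k$, together with the normalization $S_{ii}^{(\ell)}=0$, then uniquely determines $\l_\ell^{(i)}$ from the diagonal entry and $S_{ik}^{(\ell)}$ ($k\neq i$) from the off-diagonal entries. With these numbers fixed, the right hand side of the equation for $\widetilde{\boldsymbol{\phi}}_\ell^{(i)}$ lies in the range of $\mathcal{H}_0-\l_0$, and the unique solution orthogonal to each $\boldsymbol{\Psi}_0^{(k)}$ is $\widetilde{\boldsymbol{\phi}}_\ell^{(i)}$; its $\H^\infty(\mathbb{R}^d;\mathbb{C}^n)$-regularity follows by differentiating the equation and using that the coefficients of $\mathcal{H}_0$ belong to $\mathcal{W}(\mathbb{R}^d)$ together with standard elliptic regularity.

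The main obstacle I anticipate is purely combinatorial bookkeeping. Each new equation collects contributions from every previously constructed $\boldsymbol{\Phi}_k^{(i)}$, $\widetilde{\boldsymbol{\phi}}_k^{(i)}$, $S_{ik}^{(\ell)}$, and $\l_k^{(i)}$, and one must verify that after grouping them the expressions reduce exactly to those displayed in the lemma, and that the pairing with $\boldsymbol{\Psi}_0^{(k)}$ always decouples into the clean form $(\tau_i-\tau_k)S_{ik}^{(\ell)}+\l_\ell^{(i)}\d_{ik}$ with no residual cross-terms. Apart from this, every step---existence, uniqueness, and regularity---is forced mechanically by Lemma~\ref{lm3.0}, Lemma~\ref{lm5.1}, and the isolation of $\l_0$ in the spectrum of $\mathcal{H}_0$.
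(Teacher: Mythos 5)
Your proposal follows exactly the scheme the paper itself uses: the paper proves Lemma~\ref{lm5.2} by remarking that the construction of the higher-order terms is carried out by the same procedure as for $j=1,2,3$ (Lemma~\ref{lm3.0} for the cell problem, the solvability condition (\ref{3.7}) reducing via (\ref{3.9a}) and Lemma~\ref{lm5.1} to the system $(\tau_i-\tau_k)S_{ik}^{(\ell)}+\l_\ell^{(i)}\d_{ik}=(\mathbf{g}_\ell^{(i)},\boldsymbol{\Psi}_0^{(k)})$), and then invokes induction. Your outline is correct and coincides with the paper's argument, including the honest acknowledgement that the only real work left is the combinatorial bookkeeping.
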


Thus, the coefficients of the series (\ref{1.16}), (\ref{1.18})
are determined that completes the formal constructing of the
asymptotics.

We denote
\begin{equation*}
\l_{\e,k}^{(i)}:=\l_0+\sum\limits_{j=1}^{k}\e^j\l_j^{(i)},\quad
\boldsymbol{\psi}_{\e,k}^{(i)}(x):=\boldsymbol{\Psi}_{0}^{(i)}(x)
+\sum\limits_{j=1}^{k}\e^j\boldsymbol{\Psi}_j^{(i)}
\left(x,\frac{x}{\e}\right).
\end{equation*}

The next lemma follows directly from Lemma~\ref{lm5.2} and the
equations (\ref{5.1}).

\begin{lemma}\label{lm5.4}
The function $\boldsymbol{\psi}_{\e,k}^{(i)}\in
\H^\infty(\mathbb{R}^d;\mathbb{C}^n)$ solves the equation
$(\mathcal{H}_\e-\l_{\e,k}^{(i)})\boldsymbol{\psi}_{\e,k}^{(i)}
=\mathbf{f}_{\e,k}^{(i)}$, where the function
$\mathbf{f}_{\e,k}^{(i)}\in L_2(\mathbb{R}^d;\mathbb{C}^n)$
satisfies an uniform in $\e$ estimate
\begin{equation*}
\|\mathbf{f}_{\e,k}^{(i)}\|_{L_2(\mathbb{R}^d;\mathbb{C}^n)}\leqslant
C\e^{k-1}.
\end{equation*}
\end{lemma}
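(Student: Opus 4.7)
The plan is a direct computational verification that exploits the two-scale machinery already built into the formal construction of Lemma~\ref{lm5.2}. The inclusion $\boldsymbol{\psi}_{\e,k}^{(i)}\in\H^\infty(\mathbb{R}^d;\mathbb{C}^n)$ is immediate: each $\boldsymbol{\Psi}_j^{(i)}(x,\xi)$ lies in $\H^\infty(\mathbb{R}^d;C_{per}^{2+\b}(\overline{\square}))$ by Lemma~\ref{lm5.2}, so for each fixed $\e>0$ the substitution $\xi=x/\e$ yields a function of $x$ in $\H^\infty(\mathbb{R}^d;\mathbb{C}^n)$ (with norms possibly growing in $\e^{-1}$ but finite).

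The heart of the argument is the chain-rule identity $\p_i[\mathbf{F}(x,x/\e)]=(\p/\p x_i+\e^{-1}\p/\p\xi_i)\mathbf{F}(x,\xi)|_{\xi=x/\e}$, which allows one to write
\begin{equation*}
\mathcal{H}_\e[\mathbf{F}(\cdot,\cdot/\e)]=\big(\e^{-2}B(\p_\xi)^*AB(\p_\xi)+\e^{-1}\mathcal{K}_{-1}+\mathcal{K}_0\big)\mathbf{F}(x,\xi)\Big|_{\xi=x/\e}.
\end{equation*}
Applying this termwise to $\boldsymbol{\psi}_{\e,k}^{(i)}=\sum_{j=0}^k\e^j\boldsymbol{\Psi}_j^{(i)}(\cdot,\cdot/\e)$, combining with $\l_{\e,k}^{(i)}=\l_0+\sum_{l=1}^k\e^l\l_l^{(i)}$, and adopting the convention $\boldsymbol{\Psi}_j^{(i)}\equiv 0$ for $j<0$ or $j>k$, the coefficient of $\e^q$ in $(\mathcal{H}_\e-\l_{\e,k}^{(i)})\boldsymbol{\psi}_{\e,k}^{(i)}$ before the substitution $\xi=x/\e$ equals
\begin{equation*}
B(\p_\xi)^*AB(\p_\xi)\boldsymbol{\Psi}_{q+2}^{(i)}+\mathcal{K}_{-1}\boldsymbol{\Psi}_{q+1}^{(i)}+(\mathcal{K}_0-\l_0)\boldsymbol{\Psi}_q^{(i)}-\sum_{l=1}^{q}\l_l^{(i)}\boldsymbol{\Psi}_{q-l}^{(i)}.
\end{equation*}

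The next step is to invoke the hierarchy: for $q=-2$ the expression vanishes trivially since $\boldsymbol{\Psi}_0^{(i)}$ is independent of $\xi$, and for $-1\leqslant q\leqslant k-2$ every index lies in range and the display is exactly equation (\ref{5.1}) with $j=q$, hence zero by Lemma~\ref{lm5.2}. Thus only the finitely many coefficients with $q\in\{k-1,\ldots,2k\}$ can contribute to $\mathbf{f}_{\e,k}^{(i)}$, so the leading order is indeed $\e^{k-1}$. Each surviving coefficient is a finite linear combination of the $\boldsymbol{\Psi}_j^{(i)}$ with $j\leqslant k$ and their derivatives, with $x$-smooth matrix-valued factors, and thus by Lemma~\ref{lm5.2} it belongs to $L_2(\mathbb{R}^d;L_\infty(\square))$. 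The pointwise bound $|\mathbf{G}(x,x/\e)|\leqslant\|\mathbf{G}(x,\cdot)\|_{L_\infty(\square)}$ then gives the uniform in $\e$ estimate $\|\mathbf{G}(\cdot,\cdot/\e)\|_{L_2(\mathbb{R}^d;\mathbb{C}^n)}\leqslant\|\mathbf{G}\|_{L_2(\mathbb{R}^d;L_\infty(\square))}$; summing the finitely many surviving terms and factoring out $\e^{k-1}$ yields the claim. The whole argument is essentially bookkeeping, and the only mild obstacle is the careful index matching required to identify each collected coefficient with the corresponding equation from (\ref{5.1}).
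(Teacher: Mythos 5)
Your proposal is correct and is precisely the argument the paper has in mind: the paper dismisses the proof with ``follows directly from Lemma~\ref{lm5.2} and the equations (\ref{5.1})'', and your two-scale chain-rule expansion, the identification of the coefficients of $\e^q$ for $-2\leqslant q\leqslant k-2$ with (\ref{5.1}), and the $L_2(\mathbb{R}^d;L_\infty(\square))$ bound on the surviving terms of order $\e^{k-1},\ldots,\e^{2k}$ are exactly the omitted bookkeeping. No gaps.
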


Basing on Lemmas~\ref{lm5.3},~\ref{lm5.4} and proceeding
completely in the similar way as in the proof of Lemma~4.3 and
Theorem~1.1 in \cite{B2}, one can show now that the eigenvalues
$\l_\e^{(i)}$ and the associated eigenfunctions satisfy the
asymptotic expansions (\ref{1.16}), (\ref{1.18}).

\end{document}